\newcommand{\keywords}[1]{\textbf{Keywords:} #1}
\newtheorem{theorem}{Theorem}[section]
\newtheorem{corollary}{Corollary}[theorem]
\newtheorem{lemma}[theorem]{Lemma}
\newtheorem{prop}[theorem]{Proposition}
\theoremstyle{definition}
\newtheorem{definition}{Definition}[section]
\theoremstyle{remark}
\title{\textbf{On the spectrum of signed graph product and its application}}
\author[1]{Bishal Sonar\thanks{Email: bsonarnits@gmail.com}}
\author[2]{Ravi Srivastava\thanks{Corresponding author Email: ravi@nitsikkim.ac.in}}
\affil[1,2]{Department of Mathematics, National Institute of Technology Sikkim, South Sikkim 737139, India}
\date{}
\begin{document}
\parskip1ex
\parindent0pt
\maketitle

\begin{abstract}
    \noindent A  signed graph product is defined for a new product and successfully derived its adjacency spectrum, Laplacian spectrum, and signless Laplacian spectrum. Furthermore, we have generated a sequence of co-spectral signed graphs with the same spectrum and a sequence of non-co-spectral equienergetic signed graphs with the same energy but different spectra. These results represent a significant contribution to the field of graph theory and have implications for a broad range of applications.
\end{abstract}

\textbf{MSC2020 Classification:} 05C76, 05C50, 05C22\\
\keywords{$\mu$-signed graph, Adjacency spectrum, Laplacian spectrum, and signless Laplacian spectrum, Integral signed graph, equienergetic signed graphs.}

\section{Introduction}
    Signed networks offer a framework for addressing binary relationships between vertices within a network with two opposite possibilities. For instance, concepts like love and hate, trust and distrust are regarded as measures of interpersonal relationships, while alliance and antagonism between nations exemplify contradictory binary relationships on an international scale. The introduction of the signed network model for representing social systems dates back to 1956 when Harary and Cartwright proposed it as a means to generalize Heider's theory~\cite{cartwright1956structural} of balanced states in social systems, which he developed in 1946. Heider~\cite{heider1946attitudes} justified his theory of balanced states by examining potential relationships within systems comprising three entities. A signed graph is an ordered pair $\Sigma=(G,\sigma)$; where $G=(V,E)$ is a graph with vertex set $V=\{u_1,u_2,\cdots,u_n\}$, and edge set $E=\{e_1,e_2,\cdots,e_m\}$ and $\sigma:E\rightarrow \{+,-\}$ is a function which assign sign to the edges known as the signature of $\Sigma$. The sign degree of a vertex $u_i$ is denoted by $sdeg(u_i)=d^+(u_i)-d^-(u_i),$ where $d^+(u_i)$ is the number of positive edges incident to $u_i$ and $d^-(u_i)$ is the number of negative edges incident to $u_i$. Also, $d(u_i)$ is the degree of vertex $u_i$ of the underlying graph $G$. For a signed graph $\Sigma$ of order $n$, the adjacency matrix $A(\Sigma)=(a^{\sigma}_{ij})_{n\times n}$, where $a^{\sigma}_{ij}=a_{ij}\sigma(u_iu_j)$; $a_{ij}=1$ if $u_i \text{ adjacent to }u_j$ and $0$ otherwise. The diagonal degree matrix is denoted by $D(\Sigma)= d(u_i), \text{if } i=j \text{ and } 0 \text{ otherwise}$ and lastly,the Laplacian matrix is denoted by $L(\Sigma)=D(\Sigma)-A(\Sigma)$ and the signless Laplacian matrix of $\Sigma$ is denoted by $Q(\Sigma)=D(\Sigma)+A(\Sigma)$. $\mu:V\rightarrow \{+,-\}$  is a function that assign sign to the vertices of $\Sigma$ known as the marking of $\Sigma$. Here, we will discuss \textit{Canonical marking}~\cite{adhikari2023corona} defined by $\mu(u_i)=\prod_{e\in E(u_i)}\sigma(e)$; where $E(u_i)$ is the set of edges adjacent to $u_i$. The sum of absolute eigenvalues$(\lambda_i)$ of $A(\Sigma)$ is called the energy of $\Sigma$~\cite{bhat2015equienergetic} and is denoted by $$E_{\Sigma}=\sum_{i=1}^n|\lambda_i|.$$
    A graph is \textit{Integral} if all elements of the adjacency spectrum are Integer. Let $\Sigma=(G,\sigma,\mu)$ be a signed graph with marking $\mu$, then the $\mu-$signed graph~\cite{2sonar2023spectrum} is the signed graph $\Sigma_{\mu}=(G,\sigma_{\mu},\mu)$ with same marking but the following signature
        \begin{align*}
            \sigma_{\mu}(e)=\mu(u)\mu(v), \text{ for all } e(=uv)\in E(G).
        \end{align*}

     In 1970 R Frucht and F Harary~\cite{frucht1970corona} first defined the \textit{corona product} for unsigned graphs; later Cam McLeman and Erin McNicholas~\cite{mcleman2011spectra} first introduced the coronal of graphs to obtain the adjacency spectra for arbitrary graphs. Then Shu and Gui~\cite{cui2012spectrum} generalized it and defined the coronal of the Laplacian and the signless Laplacian matrix of unsigned graphs. SP Joseph~\cite{joseph2023graph} recently introduced a new product and worked on its adjacency spectra and its application in generating non-cospectral equienergetic graphs, however, no work was done for signed graphs, we have calculated the spectrum of adjacency, Laplacian, and signless Laplacian matrix using the definition of sign coronal by Amrik et al.~\cite{adhikari2023corona}. As an application we have applied it to generate the condition for getting the integral graph and in generating non-cospectral equienergetic signed graphs. The graphs considered in this paper are simple, undirected, and finite.

    \section{\textbf{Preliminaries}}
       
    \begin{theorem}~\cite{harary1953notion} \label{Th1}
        A signed graph $\Sigma=(G,\sigma,\mu)$ is balanced if and only if there exists a marking $\mu$ such that for each edge $u_iu_j$ in $\Sigma$ one has $\sigma(u_iu_j)=\mu(u_i)\mu(u_j).$
    \end{theorem}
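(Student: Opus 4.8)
The plan is to prove the equivalence in two directions, using as the working definition of balance the standard fact that a signed graph is balanced precisely when the product of the edge signs around every cycle equals $+$ (equivalently, every cycle carries an even number of negative edges).

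First I would dispatch the direction in which the marking is assumed to exist. Given $\mu$ with $\sigma(u_iu_j)=\mu(u_i)\mu(u_j)$ on every edge, I would take an arbitrary cycle $u_{i_1}u_{i_2}\cdots u_{i_k}u_{i_1}$ and form the product of its edge signatures. Substituting the hypothesis edge by edge, every vertex marking appears exactly twice in the resulting product (once for each of its two incident cycle edges), so the product collapses to $\prod_{j=1}^{k}\mu(u_{i_j})^2=+$. Since an arbitrary cycle is positive, $\Sigma$ is balanced; this half is essentially a one-line telescoping computation.

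The substantive direction is the converse. Assuming $\Sigma$ is balanced, I would construct the required marking explicitly. Working one connected component at a time, I would fix a spanning tree $T$ with root $r$, set $\mu(r)=+$, and define $\mu(v)$ for each remaining vertex as the product of the edge signs along the unique $r$-$v$ path in $T$. By telescoping, this immediately yields $\sigma(e)=\mu(u)\mu(v)$ for every tree edge $e=uv$. The crux is to verify that the same identity survives on each non-tree edge $e=uv$: adjoining $e$ to $T$ creates a unique fundamental cycle, the balance hypothesis forces the sign product around that cycle to be $+$, and I would combine this positivity with the path-product definitions of $\mu(u)$ and $\mu(v)$ to isolate $\sigma(e)$ and conclude $\sigma(e)=\mu(u)\mu(v)$.

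I expect the main obstacle to be the bookkeeping in this last step — carefully relating the fundamental-cycle sign product to the two tree-path products defining $\mu(u)$ and $\mu(v)$ and extracting the signature of the non-tree edge — rather than any genuine conceptual difficulty. A cleaner alternative I might adopt instead is the partition formulation: balance is equivalent to a bipartition of the vertices into $V_{+}$ and $V_{-}$ with positive edges inside the parts and negative edges across them, after which setting $\mu\equiv+$ on $V_{+}$ and $\mu\equiv-$ on $V_{-}$ makes $\sigma(u_iu_j)=\mu(u_i)\mu(u_j)$ immediate.
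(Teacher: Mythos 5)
Your proof is correct: the forward direction via the telescoping product around a cycle, and the converse via a rooted spanning tree with $\mu(v)$ defined as the sign product along the tree path from the root (checking non-tree edges through their fundamental cycles), together constitute the standard and complete argument for Harary's balance criterion; the bipartition alternative you sketch is equally valid. Note, however, that the paper itself offers no proof of this statement --- it is quoted as a known preliminary result with a citation to Harary (1953) --- so there is no in-paper argument to compare yours against. The only point worth pinning down explicitly is which definition of ``balanced'' you are working from (all cycles positive versus the $V_{+}/V_{-}$ bipartition), since the paper never states one; whichever you fix, your argument goes through, as the two formulations are themselves equivalent by the same spanning-tree construction.
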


   \begin{lemma}~\cite{2sonar2023spectrum}\label{L1}
    For a signed graph $\Sigma=(G,\sigma,\mu)$ the $\mu-$signed graph $\Sigma_\mu=(G,\sigma_\mu,\mu)$ is always balanced.
   \end{lemma}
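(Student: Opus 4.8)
The plan is to invoke Harary's balance criterion (Theorem~\ref{Th1}) directly. That theorem characterizes balance by the existence of a vertex marking under which every edge's sign equals the product of the marks of its two endpoints. Consequently, the entire task reduces to exhibiting one such marking for $\Sigma_\mu$, after which balance follows with no further work.

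First I would recall the defining relation of the $\mu$-signed graph: $\Sigma_\mu$ retains the same vertex marking $\mu$ as $\Sigma$, while its signature is redefined by $\sigma_\mu(uv)=\mu(u)\mu(v)$ for every edge $uv\in E(G)$. The crucial observation is that this defining equation is, verbatim, the condition appearing in Theorem~\ref{Th1}, with the candidate marking being $\mu$ itself. So the natural choice of witnessing marking is already supplied by the construction.

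The second and final step is simply to verify that $\mu$ satisfies the criterion for $\Sigma_\mu$: for each edge $e=uv$ one has $\sigma_\mu(e)=\mu(u)\mu(v)$ by definition, which is exactly the requirement $\sigma_\mu(u_iu_j)=\mu(u_i)\mu(u_j)$ of Theorem~\ref{Th1}. Since a marking meeting this condition exists, Theorem~\ref{Th1} immediately yields that $\Sigma_\mu$ is balanced.

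I do not anticipate any genuine obstacle, as the lemma asserts essentially that the $\mu$-signed graph is engineered precisely so as to satisfy Harary's criterion automatically. The only point requiring care is to confirm that the marking used in the criterion is the very same $\mu$ carried by $\Sigma_\mu$, rather than some separately chosen marking; this is guaranteed because the product $\mu(u)\mu(v)$ in the definition of $\sigma_\mu$ uses exactly these vertex marks.
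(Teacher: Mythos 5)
Your proof is correct and is exactly the intended argument: the paper states this lemma as a cited result without proof, but the natural justification is precisely to observe that the defining relation $\sigma_\mu(uv)=\mu(u)\mu(v)$ makes $\mu$ itself a witnessing marking for Harary's criterion (Theorem~\ref{Th1}). No gaps; nothing further is needed.
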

    
    \begin{lemma}\label{L2}~\cite{bapat2010graphs}
        Let $S_1,$ $S_2,$ $S_3,$ and $S_4$ be matrix of order $m_1\times m_1, m_1\times m_2, m_2\times m_1, m_2\times m_2$ respectively with $S_1$ and $S_4$ are invertible. Then 
            \begin{align*}
                \det\begin{bmatrix}
                S_1&S_2\\S_3&S_4
        \end{bmatrix}&=\det(S_1)\det(S_4-S_3S_1^{-1}S_2)\\
        &=\det(S_4)\det(S_1-S_2S_4^{-1}S_3).
        \end{align*}
    \end{lemma}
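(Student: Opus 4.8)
The plan is to reduce the block determinant to a product of determinants of the diagonal blocks by exhibiting an explicit block-triangular factorization, and then to invoke the multiplicativity of the determinant together with the standard fact that the determinant of a block-triangular matrix equals the product of the determinants of its diagonal blocks.

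For the first equality I would use the invertibility of $S_1$ to form the Schur complement $S_4 - S_3 S_1^{-1} S_2$ and write the factorization
\begin{align*}
\begin{bmatrix} S_1 & S_2 \\ S_3 & S_4 \end{bmatrix}
= \begin{bmatrix} I_{m_1} & 0 \\ S_3 S_1^{-1} & I_{m_2} \end{bmatrix}
\begin{bmatrix} S_1 & S_2 \\ 0 & S_4 - S_3 S_1^{-1} S_2 \end{bmatrix}.
\end{align*}
Verifying this is a routine block multiplication: the only non-obvious entry is the bottom-right block, where $S_3 S_1^{-1} S_2 + (S_4 - S_3 S_1^{-1} S_2) = S_4$. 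The left factor is block lower triangular with identity diagonal blocks, so its determinant is $1$; the right factor is block upper triangular, so its determinant is $\det(S_1)\det(S_4 - S_3 S_1^{-1} S_2)$. Multiplicativity then yields the first claimed identity.

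For the second equality I would instead use the invertibility of $S_4$ together with the symmetric factorization
\begin{align*}
\begin{bmatrix} S_1 & S_2 \\ S_3 & S_4 \end{bmatrix}
= \begin{bmatrix} I_{m_1} & S_2 S_4^{-1} \\ 0 & I_{m_2} \end{bmatrix}
\begin{bmatrix} S_1 - S_2 S_4^{-1} S_3 & 0 \\ S_3 & S_4 \end{bmatrix},
\end{align*}
whose left factor has determinant $1$ and whose right (block lower triangular) factor has determinant $\det(S_1 - S_2 S_4^{-1} S_3)\det(S_4)$, giving the second identity by the same reasoning.

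The only genuinely non-trivial ingredient, which I would either cite or establish by a short induction on block size, is that the determinant of a block-triangular matrix factors as the product of the determinants of its diagonal blocks; everything else is bookkeeping in block matrix arithmetic. I do not anticipate a real obstacle here, precisely because the hypotheses that $S_1$ and $S_4$ are invertible are exactly what legitimize forming the two Schur complements $S_4 - S_3 S_1^{-1} S_2$ and $S_1 - S_2 S_4^{-1} S_3$ used above.
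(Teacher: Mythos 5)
Your proof is correct: both block factorizations multiply out as claimed, the triangular factors have determinant $1$, and the block-triangular determinant fact finishes the argument. The paper itself gives no proof of this lemma --- it is quoted verbatim from the cited reference \cite{bapat2010graphs} --- so there is nothing to compare against; your argument is the standard Schur-complement proof one would find in that reference, and it is complete as written.
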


        Let $P=(p_{ij})_{n_1\times m_1}$ and $Q=(q_{ij})_{n_2\times m_2}$ be two matrices, then the Kronecker product~\cite{neumaier1992horn} $P\otimes Q$ of matrix $P$ and $Q$ is a $n_1n_2\times m_1m_2$ matrix formed by replacing each $p_{ij}$ by $p_{ij}Q$. Kronecker product is associative, $(P\otimes Q)^T=P^T\otimes Q^T$, $(P\otimes Q)(R \otimes S)=PR\otimes QS$, given the product $PR$ and $QS$ exists, $(P\otimes Q)^{-1}=P^{-1}\otimes Q^{-1}$, for non-singular matrices $P$ and $Q$, and if $P$ and $Q$ are $n\times n$ and $m\times m$ matrices, then $\det(P\times Q)=(\det P)^m(\det Q)^n$. In the next section, we define the new graph product for signed graph.

    \section{Spectral properties}
        
        \subsection{Definition}~\label{Def1}
        Let $\Sigma_1=(S_1,\sigma_1,\mu_1)$ be a signed graph of size $e_1$ and order $n_1$ with a set of vertex $\{u_1,u_2,\cdots,u_{n_1}\}$ and $\Sigma_2=(S_2,\sigma_2,\mu_2)$ be a signed graph of size $e_2$ and order $n_2$ with vertex set $\{v_1,v_2,\cdots,v_{n_2}\}$. Then the signed graph product is defined as follows:
        \begin{enumerate}[label=(\roman*)]
            \item The vertex set of $\Sigma_1\circledast \Sigma_2$ is given by\\
           $\{a_{11},a_{12},\cdots,a_{1n_2},a_{21},a_{22},\cdots,a_{2n_2},\cdots,a_{n_11},a_{n_12},\cdots,a_{n_1n_2},b_{11},b_{12},\cdot,b_{1n_2},b_{21},b_{22},\cdots, \\ b_{2n_2},\cdots,b_{n_11},b_{n_12},\cdots,b_{n_1n_2}\}$, with $\mu(a_{ij})=u_i;$ for $1\leq i\leq n_1$, and $\mu(b_{ij}=v_j);$ for $1\leq i\leq n_1$.
           \item The set of edges of $\Sigma_1\circledast \Sigma_2$ consists of the following three types of edges:\\
           \begin{itemize}\vspace{-.5cm}
               \item If the edge $(u_i,u_j)\in E(\Sigma_1)$, then the edges $(a_{ik},a_{jl});$ for $1\leq k,l\leq n_2$ belong to $\Sigma_1\circledast \Sigma_2$. Here $\sigma(a_{ik},a_{jl})=\mu(a_{ik})\cdot \mu(a_{jl}).$
               \item If the edge $(v_i,v_j)\in E(\Sigma_2)$, then the edges $(b_{ri},b_{rj});$ for $1\leq r\leq n_1$ belongs to $\Sigma_1\circledast \Sigma_2$. Here $\sigma(b_{ri},b_{rj})=\mu(b_{ri})\cdot \mu(b_{rj}).$
               \item For $1\leq i\leq n_1$, the edges $(a_{ip},b_{iq});$ for $1\leq p,q \leq n_2$ belongs to $\Sigma_1\circledast \Sigma_2$. Here $\sigma(a_{ip},b_{iq})=\mu(a_{ip})\cdot \mu(b_{iq}).$
           \end{itemize}
        \end{enumerate}

        \begin{figure}
            \centering
            \includegraphics{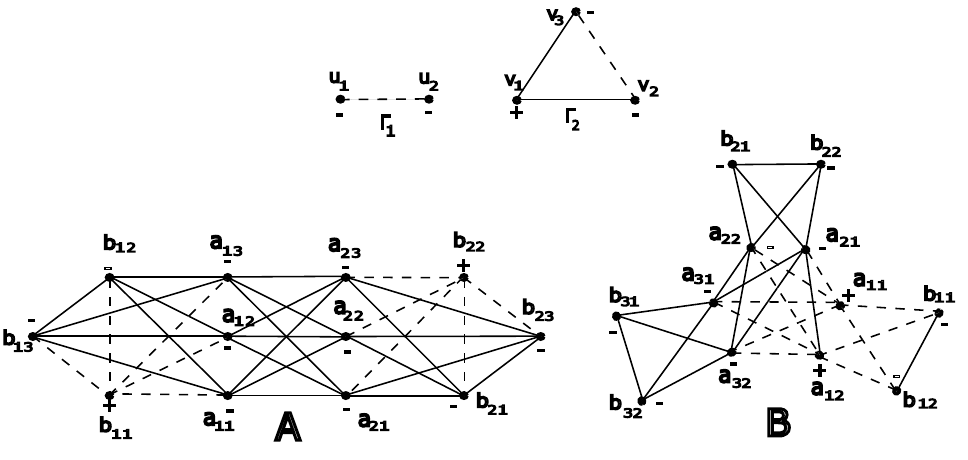}
            \caption{$A=\Sigma_1\circledast\Sigma_2$ and $B=\Sigma_2\circledast\Sigma_1.$ Doted lines implies negative edge and Straight line implies a positive edge.}
            \label{F1}
        \end{figure}
        Clearly, there are $2n_1n_2$ vertices in the product signed graph and $n_2^2e_1, n_1e_2,$ and $n_1n_2^2$ number of edges of first, second, and third type respectively. So, the total number of edges in the product signed graph is $n_2^2(n_1+e_1)+n_1e_2.$\\
        This graph product is not commutative, as already proved in \cite{joseph2023graph}. We illustrate the above construction of signed graph product using simple example in Figure~\ref{F1}. It can be verified that for $n_2=1$ the given product is the same as the corona product.
        
    \begin{definition}\cite{singh2023structural}
        Let $\Sigma=(G,\sigma,\mu)$ be a signed graph of order $n$ and $N$ be the signed graph matrix of $\Sigma$. Now, $N$ being a matrix over the rational function field $\mathbb{C}(x)$, the matrix $xI_n-N$ is non-singular and so it is invertible. The signed $N$-coronal $\chi_N(x) \in \mathbb{C}(x)$ of $\Sigma$ is defined as,
        \begin{equation}\label{Eqn1}
        \chi_N(x)=\mu(\Sigma)^T(xI_n-N)^{-1}\mu(\Sigma).
        \end{equation}
    \end{definition}
        
    Replacing $N$ in (\ref{Eqn1}) by adjacency, Laplacian, and signless Laplacian matrix we get the respective coronals.\\
    
        Using the above notions for two signed graphs $\Sigma_1$ and $\Sigma_2$ with $n_1$ and $n_2$ vertices, we have the following;\vspace{1pt}
   \begin{equation}\label{Eqn2}
        \phi(\Sigma_1)\otimes (1_{n_2}\mu(\Sigma_2)^T)\big(x I_{n_1n_2}-I_{n_1}\otimes A(\Sigma_{2\mu})\big)^{-1} \phi(\Sigma_1)\otimes (\mu(\Sigma_2)1_{n_2}^T)=-I_{n_1}\otimes\chi_{A(\Sigma_{2\mu})}(x)J_{n_2} \vspace{2pt}
    \end{equation}
    \begin{equation}\label{Eqn3}
    \begin{split}
        &\phi(\Sigma_1)\otimes (1_{n_2}\mu(\Sigma_2)^T)\big((x-r_1-n_2)I_{n_1n_2}+I_{n_1}\otimes A(\Sigma_{2\mu})\big)^{-1} \phi(\Sigma_1)\otimes (\mu(\Sigma_2)1_{n_2}^T)\\
        &=-I_{n_1}\otimes\chi_{A(\Sigma_{2\mu})}(x-n_2)J_{n_2} \vspace{2pt}
        \end{split}
    \end{equation}

        Clearly, for a signed graph $\Sigma$ of order $n$, and a signed graph matrix $N$,
        \begin{equation}\label{Eqn4}
        \begin{split}
            \chi_{N}(x)&=\frac{\mu(\Sigma)^TAdj(x I_n-N)\mu(\Sigma)}{\det(x I_n-N)}\\
            &=\frac{p(N,x)}{f(N,x)},
        \end{split}
        \end{equation}

        where $f(N,x)$ is the characteristics polynomial of the matrix $N$ of the signed graph $\Sigma$ and $p(N,x)$ is a polynomial of degree $n-1$. The aforementioned polynomial ratio can be further reduced if the greatest common divisor of these polynomials is not a constant
        \begin{equation}\label{Eqn5}
            \chi_{N}(x)=\frac{P_{d-1}(x)}{F_d(x)},
        \end{equation}
        where $P_{d-1}(x)$ and $F_d(x)$ are polynomials of degree $d-1$ and $d$ respectively and $$gcd\big(p(N,x),f(N,x)\big)=R_{n-d}(x),$$
        is of degree $n-d$.
        
    \subsection{A-spectra}
        \begin{theorem}~\label{Th2}
            Let $i=1,2;$ $\Sigma_i$ be two signed graphs of order $n_i$ and eigenvalues $\lambda_{i1},\lambda_{i2},\cdots,\lambda_{in_i}$. Let $\Sigma_{i\mu}$ be the $\mu$-signed graph of $\Sigma_i$, and ${\lambda'}_{i1},{\lambda'}_{i2},\cdots,{\lambda'}_{in_i}$ be the corresponding eigenvalues. Then the adjacency characteristics polynomial of $\Sigma_1\circledast\Sigma_2$ is,\\
            $$f(A(\Sigma_1\circledast\Sigma_2),x)=x^{n_1(n_2-1)}{R'}_{n_2-d}(x)^{n_1}\prod_{i=1}^{n_1}[x{F'}_d(x)-n_2({\lambda'}_{1i}{F'}_d(x)+{P'}_{d-1}(x))]$$
            where ${R'}_{n_2-d}, {F'}_d, {P'}_{d-1}$ are given by equation~(\ref{Eqn5}) for $\mu$-signed graph.
        \end{theorem}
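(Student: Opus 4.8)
The plan is to write the adjacency matrix of $\Sigma_1\circledast\Sigma_2$ in $2\times 2$ block form, ordering the $a$-vertices first and the $b$-vertices second, and to recognise each block as a Kronecker product. Reading off the three edge types in Definition~\ref{Def1}, every edge receives as its sign the product of the markings of its two endpoints, which is exactly a $\mu$-signature. Hence the $a$--$a$ block is $A(\Sigma_{1\mu})\otimes J_{n_2}$ (an edge $u_iu_j$ of $\Sigma_1$ spawns a full $\mu$-signed $n_2\times n_2$ all-ones block scaled by $\mu(u_i)\mu(u_j)$), the $b$--$b$ block is $I_{n_1}\otimes A(\Sigma_{2\mu})$ (a disjoint copy of $\Sigma_{2\mu}$ over each index $i$), and the two cross blocks are $\phi(\Sigma_1)\otimes(1_{n_2}\mu(\Sigma_2)^T)$ and its transpose, where $\phi(\Sigma_1)=\mathrm{diag}(\mu(u_1),\dots,\mu(u_{n_1}))$. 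Getting this block dictionary right, in particular seeing the $\mu$-signed graphs $\Sigma_{1\mu}$ and $\Sigma_{2\mu}$ arise from the endpoint-product signature, is the conceptual crux; after it the argument is pure linear algebra.

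Next I would evaluate $f(A(\Sigma_1\circledast\Sigma_2),x)=\det(xI_{2n_1n_2}-A)$ through the Schur-complement identity of Lemma~\ref{L2}, eliminating the bottom-right block $S_4=I_{n_1}\otimes(xI_{n_2}-A(\Sigma_{2\mu}))$, which is invertible over $\mathbb{C}(x)$. Its determinant is $\det(S_4)=f(A(\Sigma_{2\mu}),x)^{n_1}=\bigl(F'_d(x)R'_{n_2-d}(x)\bigr)^{n_1}$, using the factorisation $f=F'_d R'_{n_2-d}$ extracted from~(\ref{Eqn5}). The heart of the computation is the cross term $S_2S_4^{-1}S_3$: applying the mixed-product rule $(P\otimes Q)(R\otimes S)=PR\otimes QS$, using $\phi(\Sigma_1)^2=I_{n_1}$ (each marking is $\pm1$), and recognising $\mu(\Sigma_2)^T(xI_{n_2}-A(\Sigma_{2\mu}))^{-1}\mu(\Sigma_2)=\chi_{A(\Sigma_{2\mu})}(x)$ from~(\ref{Eqn1}), this term collapses to the scalar multiple $\chi_{A(\Sigma_{2\mu})}(x)\,(I_{n_1}\otimes J_{n_2})$, which is the content of~(\ref{Eqn2}). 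I expect the bookkeeping of signs and of which vectors pair off to form the coronal to be the most error-prone point here.

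With the cross term in hand, the Schur complement is $xI_{n_1n_2}-\bigl(A(\Sigma_{1\mu})+\chi_{A(\Sigma_{2\mu})}(x)\,I_{n_1}\bigr)\otimes J_{n_2}$. Since $J_{n_2}$ has spectrum $\{n_2,0^{(n_2-1)}\}$, conjugating by $I_{n_1}\otimes U$ with $U$ an orthogonal matrix diagonalising $J_{n_2}$ gives, for $M=A(\Sigma_{1\mu})+\chi_{A(\Sigma_{2\mu})}(x)I_{n_1}$,
\[
\det\bigl(xI_{n_1n_2}-M\otimes J_{n_2}\bigr)=x^{n_1(n_2-1)}\det\bigl(xI_{n_1}-n_2 M\bigr).
\]
Evaluating the right-hand determinant over the eigenvalues $\lambda'_{1i}$ of $A(\Sigma_{1\mu})$ yields $\prod_{i=1}^{n_1}\bigl(x-n_2\lambda'_{1i}-n_2\chi_{A(\Sigma_{2\mu})}(x)\bigr)$, and substituting $\chi_{A(\Sigma_{2\mu})}(x)=P'_{d-1}(x)/F'_d(x)$ and clearing the common denominator converts this to $F'_d(x)^{-n_1}\prod_{i=1}^{n_1}\bigl[xF'_d(x)-n_2(\lambda'_{1i}F'_d(x)+P'_{d-1}(x))\bigr]$. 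Finally, multiplying by $\det(S_4)=\bigl(F'_d R'_{n_2-d}\bigr)^{n_1}$ cancels the factor $F'_d(x)^{n_1}$ and leaves precisely $x^{n_1(n_2-1)}R'_{n_2-d}(x)^{n_1}\prod_{i=1}^{n_1}[xF'_d(x)-n_2(\lambda'_{1i}F'_d(x)+P'_{d-1}(x))]$, as claimed.
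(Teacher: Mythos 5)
Your proposal is correct and follows essentially the same route as the paper: the same $2\times 2$ Kronecker block decomposition of $A(\Sigma_1\circledast\Sigma_2)$, the Schur-complement formula of Lemma~\ref{L2} eliminating $I_{n_1}\otimes(xI_{n_2}-A(\Sigma_{2\mu}))$, the coronal identity to collapse the cross term to $I_{n_1}\otimes\chi_{A(\Sigma_{2\mu})}(x)J_{n_2}$, the spectrum of $J_{n_2}$ to extract $x^{n_1(n_2-1)}$, and the substitution $\chi=P'_{d-1}/F'_d$ with $f(A(\Sigma_{2\mu}),x)=R'_{n_2-d}F'_d$ to clear denominators. Your explicit identification $\phi(\Sigma_1)=\mathrm{diag}(\mu(u_1),\dots,\mu(u_{n_1}))$ with $\phi(\Sigma_1)^2=I_{n_1}$, and the correct total dimension $2n_1n_2$, are if anything slightly cleaner than the paper's write-up.
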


        \begin{proof}
            Following the definition~\ref{Def1}, the adjacency matrix of $\Sigma_1\circledast\Sigma_2$ can be written as,
            $$A(\Sigma_1\circledast\Sigma_2)=\begin{bmatrix}
                A(\Sigma_{1\mu})\otimes J_{n_2} &\phi(\Sigma_1)\otimes (1_{n_2}\mu(\Sigma_2)^T) \\
                \phi(\Sigma_1)\otimes (\mu(\Sigma_2)1_{n_2}^T) &I_{n_1}\otimes A(\Sigma_{2\mu})
            \end{bmatrix}.$$
            Then using Lemma~\ref{L2} the adjacency polynomial of $\Sigma_1\circledast\Sigma_2$ is,\\
            \begin{align}
                f(A(\Sigma_1\circledast\Sigma_2),x)&=\det\big[xI_{n_1(2+n_2)}-A(\Sigma_1\circledast\Sigma_2)\big]\notag\\
                &=\det\begin{bmatrix}
                    xI_{n_1n_2}-A(\Sigma_{1\mu})\otimes J_{n_2} &-\phi(\Sigma_1)\otimes (1_{n_2}\mu(\Sigma_2)^T)\notag\\
                -\phi(\Sigma_1)\otimes (\mu(\Sigma_2)1_{n_2}^T) &xI_{n_1n_2}-I_{n_1}\otimes A(\Sigma_{2\mu})
                \end{bmatrix} \notag\\
                &=\det\big[xI_{n_1n_2}-I_{n_1}\otimes A(\Sigma_{2\mu})\big] \cdot\det\Big[{xI_{n_1n_2}-A(\Sigma_{1\mu})\otimes J_{n_2}}-\notag\\
                &\{\phi(\Sigma_1)\otimes (1_{n_2}\mu(\Sigma_2)^T)\}\{xI_{n_1n_2}-I_{n_1}\otimes A(\Sigma_{2\mu})\}^{-1}\{\phi(\Sigma_1)\otimes (\mu(\Sigma_2)1_{n_2}^T)\}\Big]\notag\\
                &=\det(I_{n_1})^{n_2}\cdot\det\big[xI_{n_2}-A(\Sigma_{2\mu})\big]^{n_1}\cdot \det\Big[{xI_{n_1n_2}-A(\Sigma_{1\mu})\otimes J_{n_2}}- \{I_{n_1}\otimes\notag\\
                &\chi_{A(\Sigma_{2\mu})}(x)J_{n_2}\}\big]\notag\\
                &=\det(xI_{n_2}-A(\Sigma_{2\mu}))^{n_1}\cdot \det\Big[{xI_{n_1n_2}-A(\Sigma_{1\mu})\otimes J_{n_2}}- \{I_{n_1}\otimes \chi_{A(\Sigma_{2\mu})}(x)J_{n_2}\}\big]\notag\\
                &=f(A(\Sigma_{2\mu}),x)^{n_1}\cdot\prod(x-\alpha_i\beta_j), \text{where the product is taken over all the eigen-}\notag\\
                &\text{values ${\alpha_i}'s$ and ${\beta_j}'s$ of $A(\Sigma_{1\mu})+\chi_{A(\Sigma_{2\mu})}(x)I_{n_1}$ and $J_{n_2}$ respectively.}\notag\\
                &=x^{n_1(n_2-1)}f(A(\Sigma_{2\mu}),x)^{n_1}\cdot \prod_{i=1}^{n_1}\big[x-n_2({\lambda'}_{1i}+\chi_{A(\Sigma_{2\mu})}(x))\big]\label{Eqn6}\\ 
                &=x^{n_1(n_2-1)}\big(R'_{n_2-d}(x)F'_d(x)\big)^{n_1}\prod_{i=1}^{n_1}\Big[x-n_2({\lambda'}_{1i}+\frac{P'_{d-1}(x)}{F'_d(x)}\Big]\notag\\
                &=x^{n_1(n_2-1)}\big(R'_{n_2-d}(x)\big)^{n_1}\prod_{i=1}^{n_1}\Big[xF'_d(x)-n_2({\lambda'}_{1i}F'_d(x)+P'_{d-1}(x))\Big]. \notag
          \end{align}

          $\therefore$ ,$f(A(\Sigma_1\circledast\Sigma_2),x)=x^{n_1(n_2-1)}{R'}_{n_2-d}(x)^{n_1}\prod_{i=1}^{n_1}[x{F'}_d(x)-n_2({\lambda'}_{1i}{F'}_d(x)+{P'}_{d-1}(x))]$\\
          To get this result we have used different results of Kronecker Product mentioned in the Preliminary section.
        \end{proof}
        \begin{corollary}\label{C1}
            For $n_2=1$, the adjacency characteristics polynomial is the same as the Adjacency characteristics polynomial of the corona product of two graphs.
        \end{corollary}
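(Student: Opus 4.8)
The plan is to specialize the closed form of Theorem~\ref{Th2} (equivalently the intermediate expression~(\ref{Eqn6})) to the case $n_2=1$ and verify that it collapses to the classical adjacency characteristic polynomial of the corona product. Since the construction of Definition~\ref{Def1} already reduces to the corona product when $n_2=1$ (as noted immediately after the definition), the two polynomials must coincide; the point of the corollary is to witness this directly from the formula. I would therefore first record the contribution of the single-vertex factor and then substitute.

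First I would observe that for $n_2=1$ the graph $\Sigma_{2\mu}$ is a single isolated vertex, so $A(\Sigma_{2\mu})=[0]$ and hence $f(A(\Sigma_{2\mu}),x)=x$. Its marking vector is a single entry $\mu\in\{+,-\}$ with $\mu^2=1$, so by~(\ref{Eqn1}) the coronal is
$$\chi_{A(\Sigma_{2\mu})}(x)=\mu\,(x-0)^{-1}\mu=\frac{1}{x}.$$
In the reduced notation of~(\ref{Eqn5}) this forces $d=1$, $F'_d(x)=x$, $P'_{d-1}(x)=1$, and the cancelled gcd factor is constant, $R'_{n_2-d}(x)=R'_0(x)=1$.

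Next I would substitute these values together with $n_2=1$ into~(\ref{Eqn6}). The prefactor $x^{n_1(n_2-1)}$ becomes $x^0=1$, the factor $f(A(\Sigma_{2\mu}),x)^{n_1}$ becomes $x^{n_1}$, and each bracket becomes $x-\big(\lambda'_{1i}+\frac{1}{x}\big)$. Combining and clearing the factor $x$ inside each of the $n_1$ terms against the leading $x^{n_1}$ gives
$$f(A(\Sigma_1\circledast\Sigma_2),x)=x^{n_1}\prod_{i=1}^{n_1}\Big(x-\lambda'_{1i}-\frac{1}{x}\Big)=\prod_{i=1}^{n_1}\big(x^2-\lambda'_{1i}x-1\big).$$

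Finally I would compare this with the known corona formula of Frucht--Harary~\cite{frucht1970corona} in the coronal form of McLeman--McNicholas~\cite{mcleman2011spectra}, which for attaching one pendant vertex to each vertex of $\Sigma_{1\mu}$ yields exactly $\prod_{i=1}^{n_1}(x^2-\lambda'_{1i}x-1)$, since each eigenvalue $\lambda'_{1i}$ is replaced by the two roots of $x^2-\lambda'_{1i}x-1$; matching the two expressions finishes the proof. The step I expect to require the most care is the sign bookkeeping: each pendant edge $(a_{i1},b_{i1})$ carries the sign $\mu(u_i)\mu(v_1)$, so the off-diagonal block is a diagonal $\pm1$ matrix $D$ with $D^2=I_{n_1}$, and one must check that $D(xI_{n_1})^{-1}D=\frac{1}{x}I_{n_1}$, i.e. that the pendant contribution is sign-independent, so that the shifted eigenvalues appearing here are genuinely those of the $\mu$-signed graph $\Sigma_{1\mu}$ and the reduction to the (signed) corona is faithful.
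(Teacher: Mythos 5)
Your proposal is correct and follows essentially the same route as the paper: both specialize equation~(\ref{Eqn6}) to $n_2=1$ and identify the result with the corona-product characteristic polynomial. You are merely more explicit, computing $\chi_{A(\Sigma_{2\mu})}(x)=1/x$ and the factors $F'_1(x)=x$, $P'_0(x)=1$, $R'_0(x)=1$ for the single-vertex case and checking the $\pm1$ off-diagonal block directly, whereas the paper keeps the formula in the abstract form $f(A(\Sigma_{2\mu}),x)^{n_1}\,f\bigl(A(\Sigma_{1\mu})-\chi_{A(\Sigma_{2\mu})}(x)\bigr)$ and invokes Lemma~\ref{L1} to pass from the $\mu$-signed graphs back to $\Sigma_1,\Sigma_2$.
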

        \begin{proof}
            From equation (\ref{Eqn6}) we have;\\ 
            \begin{align*}
                f(A(\Sigma_1\circledast\Sigma_2),x)&=f(A(\Sigma_{2\mu}),x)^{n_1}\cdot \prod_{i=1}^{n_1}\big[x-({\lambda'}_{1i}+\chi_{A(\Sigma_{2\mu})}(x))\big]\\
                &=f(A(\Sigma_{2\mu}),x)^{n_1}\cdot f(A(\Sigma_{1\mu})-\chi_{A(\Sigma_{2\mu})}(x)).
            \end{align*}
            Now, from Lemma~\ref{L1} we have the $\mu$ signed graph of a signed graph is always balanced. So, $A(\Sigma_{\mu})$ and $A(\Sigma)$ has the same eigenvalues.\\
            Hence, $f(A(\Sigma_1\circledast\Sigma_2),x)=f(A(\Sigma_2),x)^{n_1}\cdot f(A(\Sigma_1)-\chi_{A(\Sigma_2)}(x))=$ Corona product of $\Sigma_1$ and $\Sigma_2.$
        \end{proof}

        \begin{corollary}
         Let $\Sigma_1$ and $\Sigma_2$ be two $A(\Sigma_{\mu})-$co-spectral signed graphs and $\Sigma$ is any arbitrary signed graph, then 
            \begin{enumerate}[label=(\roman*)]
                \item $\Sigma_1\circledast\Sigma$ and $\Sigma_2\circledast\Sigma$ are $A-$co-spectral.
                \item $\Sigma\circledast\Sigma_1$ and $\Sigma\circledast\Sigma_2$ are co-spectral if $\chi_{A(\Sigma_{1\mu})}(x)=\chi_{A(\Sigma_{2\mu})}(x).$
            \end{enumerate}
     \end{corollary}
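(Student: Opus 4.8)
The plan is to read both parts directly off the characteristic-polynomial formula of Theorem~\ref{Th2}, tracking which ingredients of that formula are governed by the first factor of the product and which by the second. Substituting into
$$f(A(\Sigma_1\circledast\Sigma_2),x)=x^{n_1(n_2-1)}{R'}_{n_2-d}(x)^{n_1}\prod_{i=1}^{n_1}\big[x{F'}_d(x)-n_2({\lambda'}_{1i}{F'}_d(x)+{P'}_{d-1}(x))\big],$$
the dependence on the first factor enters only through its order $n_1$ and through the eigenvalues ${\lambda'}_{1i}$ of its $\mu$-signed graph, while the dependence on the second factor enters through its order $n_2$ and through the polynomials ${R'}_{n_2-d},{F'}_d,{P'}_{d-1}$ attached to its $\mu$-signed graph by~(\ref{Eqn5}). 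Throughout I use that $A(\Sigma_\mu)$-co-spectrality of $\Sigma_1$ and $\Sigma_2$ forces $|V(\Sigma_1)|=|V(\Sigma_2)|$, that $\Sigma_{1\mu}$ and $\Sigma_{2\mu}$ share the same adjacency eigenvalue multiset, and that they have the same characteristic polynomial.

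For part~(i) the second factor is the fixed graph $\Sigma$, so every second-factor quantity---the order $n_2=|V(\Sigma)|$ and the polynomials ${R'}_{n_2-d},{F'}_d,{P'}_{d-1}$ coming from $\Sigma_\mu$---is literally the same in $f(A(\Sigma_1\circledast\Sigma),x)$ and $f(A(\Sigma_2\circledast\Sigma),x)$. The only remaining data are $|V(\Sigma_1)|=|V(\Sigma_2)|$ and the eigenvalues of the first factor, which enter solely through the product $\prod_i[\,\cdots\,]$ as a symmetric function of that eigenvalue multiset. Since co-spectrality of the $\mu$-signed graphs makes these multisets equal, the two polynomials coincide and the products are $A$-co-spectral; notably no assumption on the coronal is needed here.

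For part~(ii) the roles reverse: the first factor $\Sigma$ is common, so its order and the eigenvalues ${\lambda'}_{1i}$ feeding the product are shared, whereas the second factor ranges over $\Sigma_1$ and $\Sigma_2$. I must therefore show that the second-factor data $n_2$, ${R'}_{n_2-d}$, ${F'}_d$ and ${P'}_{d-1}$ agree for $\Sigma_1$ and $\Sigma_2$. Equality of the orders and of the full characteristic polynomials $f(A(\Sigma_{1\mu}),x)=f(A(\Sigma_{2\mu}),x)={R'}_{n-d}(x){F'}_d(x)$ is immediate from co-spectrality, while the extra hypothesis $\chi_{A(\Sigma_{1\mu})}(x)=\chi_{A(\Sigma_{2\mu})}(x)$ says the two reduced fractions ${P'}_{d-1}/{F'}_d$ coincide as rational functions.

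The main obstacle is to upgrade these two aggregate equalities---equal product ${R'}{F'}$ and equal reduced ratio ${P'}/{F'}$---into equality of the individual polynomials. I would resolve this by a normalisation argument: since $\det(xI-A(\Sigma_\mu))$ is monic and $\gcd(p,f)={R'}_{n-d}$ is taken monic, the denominator ${F'}_d$ in the lowest-terms form~(\ref{Eqn5}) is monic, so the lowest-terms representation of the common coronal is unique. This forces the denominators to agree, ${F'}_d$ (and hence the degree $d$) being the same for both, and likewise the numerators ${P'}_{d-1}$; cancelling the shared factor ${F'}_d$ in $f={R'}_{n-d}{F'}_d$ then yields equal ${R'}_{n-d}$. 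With all second-factor data matched, Theorem~\ref{Th2} returns identical characteristic polynomials for $\Sigma\circledast\Sigma_1$ and $\Sigma\circledast\Sigma_2$, so they are co-spectral, completing the proof.
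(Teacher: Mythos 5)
Your proposal is correct and takes the only natural route: reading both claims off the characteristic-polynomial formula of Theorem~\ref{Th2}, which is exactly what the paper intends (it states this corollary with no proof at all as an immediate consequence of that theorem). Your additional care in part~(ii) --- using monic normalisation to upgrade equality of the coronal $\chi_{A(\Sigma_{1\mu})}=\chi_{A(\Sigma_{2\mu})}$ and of the characteristic polynomials into equality of the individual factors ${F'}_d$, ${P'}_{d-1}$ and ${R'}_{n_2-d}$ --- is a detail the paper omits but which is genuinely needed, so your write-up is if anything more complete than the source.
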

    \subsection{L-Spectra and Q-spectra of regular signed graphs}
     
        \begin{theorem}~\label{Th3}
            Let $i=1,2;$ $\Sigma_i$ be two $r_i$-regular signed graphs of order $n_i$ and Laplacian eigenvalues $\mu_{i1},\mu_{i2},\ldots,\mu_{in_i}$. Let $\Sigma_{i\mu}$ be the $\mu$-signed graph of $\Sigma_i$, and ${\mu'}_{i1},{\mu'}_{i2},\ldots,{\mu'}_{in_i}$ be the corresponding Laplacian eigenvalues. Then the Laplacian characteristic polynomial of $\Sigma_1\circledast\Sigma_2$ is\\
            $f(L(\Sigma_1\circledast\Sigma_2),x)=(x-r_1n_2-n_2))^{n_1(n_2-1)}\big(R'_{n_2-d}(x-n_2)\big)^{n_1}\prod_{i=1}^{n_1}\Big[(x-n_2)F'_d(x-n_2)-n_2({\mu'}_{1i}F'_d(x-n_2)+P'_{d-1}(x-n_2)\Big],$\\
            where ${R'}_{n_2-d}, {F'}_d, {P'}_{d-1}$ are given by equation~(\ref{Eqn5}) of $\mu$-signed graph for Laplacian matrix.
        \end{theorem}
        \begin{proof}
            Following the definition~\ref{Def1} of $\Sigma_1\circledast\Sigma_2$ we have
            $$A(\Sigma_1\circledast\Sigma_2)=\begin{bmatrix}
                A(\Sigma_{1\mu})\otimes J_{n_2} &\phi(\Sigma_1)\otimes (1_{n_2}\mu(\Sigma_2)^T) \\
                \phi(\Sigma_1)\otimes (\mu(\Sigma_2)1_{n_2}^T) &I_{n_1}\otimes A(\Sigma_{2\mu})
            \end{bmatrix}$$
            $$D(\Sigma_1\circledast\Sigma_2)=diag\big(n_2(r_1+1)I_{n_1n_2},(r_2+n_2)I_{n_1n_2}\big)$$
            Then the Laplacian matrix of $\Sigma_1\circledast\Sigma_2$ is
            $$L(\Sigma_1\circledast\Sigma_2)=\begin{bmatrix}
                n_2(r_1+1)I_{n_1n_2}-A(\Sigma_{1\mu})\otimes J_{n_2} &-\phi(\Sigma_1)\otimes (1_{n_2}\mu(\Sigma_2)^T) \\
                -\phi(\Sigma_1)\otimes (\mu(\Sigma_2)1_{n_2}^T) &I_{n_1}\otimes (n_2I_{n_2}+L(\Sigma_{2\mu}))
            \end{bmatrix}$$
            Then using Lemma~\ref{L2} the Laplacian polynomial of $\Sigma_1\circledast\Sigma_2$ is,\\
            \begin{align}
                &f(L(\Sigma_1\circledast\Sigma_2),x)=\det\big[xI_{2n_1n_2}-L(\Sigma_1\circledast\Sigma_2)\big]\notag\\
                &=\det\begin{bmatrix}
                    (x-n_2(r_1+1))I_{n_1n_2}+A(\Sigma_{1\mu})\otimes J_{n_2} &\phi(\Sigma_1)\otimes (1_{n_2}\mu(\Sigma_2)^T)\notag\\
                \phi(\Sigma_1)\otimes (\mu(\Sigma_2)1_{n_2}^T) &I_{n_1}\otimes (x-n_2)I_{n_2}-L(\Sigma_{2\mu})
                \end{bmatrix} \notag\\
                &=\det\big[I_{n_1}\otimes (x-n_2)I_{n_2}-L(\Sigma_{2\mu})\big] \cdot\det\Big[\{(x-n_2(r_1+1))I_{n_1n_2}+A(\Sigma_{1\mu})\notag\otimes J_{n_2}\}-\{\phi(\Sigma_1)\otimes\\
                & (1_{n_2}\mu(\Sigma_2)^T)\}\{(I_{n_1}\otimes (x-n_2)I_{n_2}-L(\Sigma_{2\mu})\}^{-1}\{\phi(\Sigma_1)\notag\otimes (\mu(\Sigma_2)1_{n_2}^T)\}\Big]\notag\\
                &=\det(I_{n_1})^{n_2}\cdot\det\big[(x-n_2)I_{n_2}-L(\Sigma_{2\mu})\big]^{n_1}\cdot \det\Big[\{(x-n_2(r_1+1))I_{n_1n_2}+A(\Sigma_{1\mu})\otimes J_{n_2}\}+\notag\\
                &\{I_{n_1}\otimes\chi_{L(\Sigma_{2\mu})}(x-n_2)J_{n_2}\}\big]\notag\\
                &=\det((x-n_2)I_{n_2}-L(\Sigma_{2\mu}))^{n_1}\cdot \det\Big[\{(x-n_2(r_1+1))I_{n_1n_2}+A(\Sigma_{1\mu})
                \otimes J_{n_2}\}-\{I_{n_1}\otimes\notag\\
                & \chi_{L(\Sigma_{2\mu})}(x-n_2)J_{n_2}\}\big]\notag\\
                &=f(L(\Sigma_{2\mu}),(x-n_2))^{n_1}\cdot\det\big[(x-n_2r_1-n_2)I_{n_1n_2}+A(\Sigma_{1\mu})-\chi_{L(\Sigma_{1\mu})}(x-n_2)I_{n_1}\otimes J_{n_2}\big]\notag\\
                &=f(L(\Sigma_{2\mu}),(x-n_2))^{n_1}\cdot\prod((x-n_2(r_1+1))-\alpha_i\beta_j), \text{here the product is over the eigenvalues}\notag\\
                &\alpha_i \text{ and } \beta_j \text{ of } A(\Sigma_{1\mu})-\chi_{L(\Sigma_{2\mu})}(x-n_2)I_{n_1}\text{ and } J_{n_2}\text{ respectively.}\notag
                \end{align}
                \begin{align}
                &=(x-r_1n_2-n_2)^{n_1(n_2-1)}f(L(\Sigma_{2\mu}),(x-n_2))^{n_1}\cdot \prod_{i=1}^{n_1}\big[(x-n_2-n_2r_1)+n_2({\lambda'}_{1i}-\chi_{L(\Sigma_{2\mu})}(x-n_2))\big]\notag\\
                &=(x-r_1n_2-n_2)^{n_1(n_2-1)}f(L(\Sigma_{2\mu}),(x-n_2))^{n_1}\cdot \prod_{i=1}^{n_1}\big[(x-n_2)-n_2{\mu'}_{1i}-n_2\chi_{L(\Sigma_{2\mu})}(x-n_2))\big]\label{Eqn7}\\
                &=(x-r_1n_2-n_2))^{n_1(n_2-1)}\big(R'_{n_2-d}(x-n_2)F'_d(x-n_2)\big)^{n_1}\prod_{i=1}^{n_1}\Big[(x-n_2)-n_2({\mu'}_{1i}+\frac{P'_{d-1}(x-n_2)}{F'_d(x-n_2)}\Big]\notag\\
                &=(x-r_1n_2-n_2))^{n_1(n_2-1)}\big(R'_{n_2-d}(x-n_2)\big)^{n_1}\prod_{i=1}^{n_1}\Big[(x-n_2)F'_d(x-n_2)-n_2({\mu'}_{1i}F'_d(x-n_2)+\notag\\
                &P'_{d-1}(x-n_2)\Big]. \notag
            \end{align}

          Therefore ,$f(L(\Sigma_1\circledast\Sigma_2),x)=(x-r_1n_2-n_2))^{n_1(n_2-1)}\big(R'_{n_2-d}(x-n_2)\big)^{n_1}\prod_{i=1}^{n_1}\Big[(x-n_2)F'_d(x-n_2)-n_2({\mu'}_{1i}F'_d(x-n_2)+P'_{d-1}(x-n_2)\Big].$\\
          To get this result we have used different results of Kronecker Product mentioned in the Preliminary section.
        \end{proof}

        \begin{corollary}
            For $n_2=1$, the Laplacian characteristics polynomial is the same as the Laplacian characteristics polynomial of the corona product of two graphs.
        \end{corollary}
        \begin{proof}
            From equation (\ref{Eqn7}) we have;
            \begin{equation*}
                \begin{split}
                    f(L(\Sigma_1\circledast\Sigma_2),x)&=(x-r_1n_2-n_2)^{n_1(n_2-1)}f(L(\Sigma_{2\mu}),(x-n_2))^{n_1}\cdot \prod_{i=1}^{n_1}\big[(x-n_2)-n_2{\mu'}_{1i}-\notag\\&n_2\chi_{L(\Sigma_{2\mu})}(x-n_2))\big]\notag\\
                    &=f(L(\Sigma_{2\mu}),(x-1))^{n_1}\cdot \prod_{i=1}^{n_1}\big[(x-1)-{\mu'}_{1i}-\chi_{L(\Sigma_{2\mu})}(x-1)).
                \end{split}
            \end{equation*}
            Now, from Lemma~\ref{L1} we have the $\mu$ signed graph of a signed graph is always balanced. So, $L(\Sigma_{\mu})$ and $L(\Sigma)$ has the same eigenvalues.\\
            Hence, $f(L(\Sigma_1\circledast\Sigma_2),x)=f(L(\Sigma_{2}),(x-1))^{n_1}\cdot \prod_{i=1}^{n_1}\big[(x-1)-{\mu}_{1i}-\chi_{L(\Sigma_{2})}(x-1))=$ Laplacian characteristics polynomial of corona product of $\Sigma_1$ and $\Sigma_2.$ 
        \end{proof}

        \begin{corollary}
         Let $\Sigma_1$ and $\Sigma_2$ be two $L(\Sigma_{\mu})$-co-spectral signed graphs and $\Sigma$ is any arbitrary signed graph, then 
            \begin{enumerate}[label=(\roman*)]
                \item $\Sigma_1\circledast\Sigma$ and $\Sigma_2\circledast\Sigma$ are $L$-co-spectral.
                \item $\Sigma\circledast\Sigma_1$ and $\Sigma\circledast\Sigma_2$ are $L$-co-spectral if $\chi_{L(\Sigma_{1\mu})}(x)=\chi_{L(\Sigma_{2\mu})}(x).$
            \end{enumerate}
     \end{corollary}

        \begin{theorem}
            Let $i=1,2;$ $\Sigma_i$ be two $r_i$-regular signed graphs of order $n_i$ and signless Laplacian eigenvalues $\nu_{i1},\nu_{i2},\ldots,\nu_{in_i}$. Let $\Sigma_{i\mu}$ be the $\mu$-signed graph of $\Sigma_i$, and ${\nu'}_{i1},{\nu'}_{i2},\ldots,{\nu'}_{in_i}$ be the corresponding signless Laplacian eigenvalues. Then the Laplacian characteristic polynomial of $\Sigma_1\circledast\Sigma_2$ is\\
            $f(Q(\Sigma_1\circledast\Sigma_2),x)=(x-r_1n_2-n_2))^{n_1(n_2-1)}\big(R'_{n_2-d}(x-n_2)\big)^{n_1}\prod_{i=1}^{n_1}\Big[(x-n_2)F'_d(x-n_2)-n_2({\nu'}_{1i}F'_d(x-n_2)+P'_{d-1}(x-n_2)\Big],$\\
            where ${R'}_{n_2-d}, {F'}_d, {P'}_{d-1}$ are given by equation~(\ref{Eqn5}) of $\mu$-signed graph for signless Laplacian matrix.
        \end{theorem}
        \begin{proof}
             Following the definition~\ref{Def1} of $\Sigma_1\circledast\Sigma_2$ we have
            $$A(\Sigma_1\circledast\Sigma_2)=\begin{bmatrix}
                A(\Sigma_{1\mu})\otimes J_{n_2} &\phi(\Sigma_1)\otimes (1_{n_2}\mu(\Sigma_2)^T) \\
                \phi(\Sigma_1)\otimes (\mu(\Sigma_2)1_{n_2}^T) &I_{n_1}\otimes A(\Sigma_{2\mu})
            \end{bmatrix}$$
            $$D(\Sigma_1\circledast\Sigma_2)=diag\big(n_2(r_1+1)I_{n_1n_2},(r_2+n_2)I_{n_1n_2}\big)$$
            Then the signless Laplacian matrix of $\Sigma_1\circledast\Sigma_2$ is
            $$Q(\Sigma_1\circledast\Sigma_2)=\begin{bmatrix}
                n_2(r_1+1)I_{n_1n_2}+A(\Sigma_{1\mu})\otimes J_{n_2} &\phi(\Sigma_1)\otimes (1_{n_2}\mu(\Sigma_2)^T) \\
                \phi(\Sigma_1)\otimes (\mu(\Sigma_2)1_{n_2}^T) &I_{n_1}\otimes (n_2I_{n_2}+Q(\Sigma_{2\mu}))
            \end{bmatrix}$$
            The later part of proof is same as theorem~\ref{Th3}.
        \end{proof}

        \begin{corollary}
            For $n_2=1$, the Signless Laplacian characteristics polynomial is the same as the Signless Laplacian characteristics polynomial of the corona product of two graphs.
        \end{corollary}
        \begin{proof}
            Proof is same as Corollary~\ref{C1}.
        \end{proof}

        \begin{corollary}
         Let $\Sigma_1$ and $\Sigma_2$ be two $Q(\Sigma_{\mu})$-co-spectral signed graphs and $\Sigma$ is any arbitrary signed graph, then 
            \begin{enumerate}[label=(\roman*)]
                \item $\Sigma_1\circledast\Sigma$ and $\Sigma_2\circledast\Sigma$ are $Q$-co-spectral.
                \item $\Sigma\circledast\Sigma_1$ and $\Sigma\circledast\Sigma_2$ are $Q$-co-spectral if $\chi_{Q(\Sigma_{1\mu})}(x)=\chi_{Q(\Sigma_{2\mu})}(x).$
            \end{enumerate}
     \end{corollary}

    \section{Application}
    In this section we discuss some application of signed graph product $\Sigma_1\circledast\Sigma_2$ and its spectrum in generating infinite family of integral signed graphs, and sequence of non-co-spectral equienergetic signed graphs.
    \subsection{Integral Signed Graphs}
    The following Theorem shows the conditions for $\Sigma_1\circledast\Sigma_2$ to be an integral signed graph.

    \begin{theorem}
        Let $i=1,2$, $\Sigma_i$ be two signed graph with $n_i$ vertices. Then $\Sigma_1\circledast\Sigma_2$ is integral signed graph if and only if the roots of $R'_{n_2-d}(x)$ and for $i=1,2,\cdots,n_1$ $x{F'}_d(x)-n_2({\lambda'}_{1i}{F'}_d(x)+{P'}_{d-1}(x))$ are integers.
    \end{theorem}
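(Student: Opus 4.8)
The plan is to read the eigenvalues directly off the adjacency characteristic polynomial computed in Theorem~\ref{Th2} and to apply the definition of an integral signed graph. Recall that $\Sigma_1\circledast\Sigma_2$ is integral precisely when every eigenvalue of $A(\Sigma_1\circledast\Sigma_2)$ is an integer, equivalently when every root of $f(A(\Sigma_1\circledast\Sigma_2),x)$ is an integer, counted with multiplicity. So the entire argument reduces to analysing the roots of the factored polynomial already in hand.

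First I would recall the factorization from Theorem~\ref{Th2},
$$f(A(\Sigma_1\circledast\Sigma_2),x)=x^{n_1(n_2-1)}\,{R'}_{n_2-d}(x)^{n_1}\prod_{i=1}^{n_1}\bigl[x{F'}_d(x)-n_2({\lambda'}_{1i}{F'}_d(x)+{P'}_{d-1}(x))\bigr],$$
and observe that the full spectrum of the product graph is exactly the multiset of roots of these three factors. As a sanity check on completeness, the three factors have degrees $n_1(n_2-1)$, $n_1(n_2-d)$, and $n_1(d+1)$ respectively, which sum to $2n_1n_2$, the order of $\Sigma_1\circledast\Sigma_2$; hence no eigenvalue is missed and the roots of the three factors account for the whole spectrum.

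Next I would treat each factor in turn. The factor $x^{n_1(n_2-1)}$ contributes the eigenvalue $0$ with multiplicity $n_1(n_2-1)$, and $0$ is always an integer, so this factor imposes no constraint. The remaining eigenvalues are the roots of ${R'}_{n_2-d}(x)$, each with multiplicity $n_1$, together with the roots of the $n_1$ polynomials $x{F'}_d(x)-n_2({\lambda'}_{1i}{F'}_d(x)+{P'}_{d-1}(x))$ for $i=1,2,\cdots,n_1$. Since integrality of the whole spectrum is equivalent to integrality of each of these remaining roots, the stated equivalence follows: $\Sigma_1\circledast\Sigma_2$ is integral if and only if the roots of $R'_{n_2-d}(x)$ and, for every $i$, the roots of $x{F'}_d(x)-n_2({\lambda'}_{1i}{F'}_d(x)+{P'}_{d-1}(x))$ are all integers.

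There is no genuine obstacle here, as the result is essentially a direct translation of Theorem~\ref{Th2} through the definition of an integral graph. The only point requiring care is the degree-and-multiplicity bookkeeping that confirms the three factors exhaust all $2n_1n_2$ eigenvalues, so that neither direction of the equivalence silently drops or adds a root; everything else is immediate once the factorization is in place.
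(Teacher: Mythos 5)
Your proposal is correct and takes the same route as the paper, which simply states that the result follows from Theorem~\ref{Th2}; you have just made explicit the eigenvalue read-off and the degree count $n_1(n_2-1)+n_1(n_2-d)+n_1(d+1)=2n_1n_2$ confirming the factors exhaust the spectrum. The observation that the factor $x^{n_1(n_2-1)}$ contributes only the integer eigenvalue $0$, and hence imposes no condition, correctly accounts for why it is absent from the stated criterion.
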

    \begin{proof}
        Proof follows from Theorem~\ref{Th2}.
    \end{proof}
    We us the result by Bishal et. al.~\cite{sonar2023spectrum}, let $\Sigma=(K_{1,n},\sigma,\mu)$ be a signed star with $V(\Sigma)=\{u_1,u_2,\cdots,u_{n+1}\}$ such that $d(u_1)=n$, then 
    $$\chi_{A(\Sigma)}(x)=\frac{(n+1)x+2n\mu(u_1)}{x^2-n}$$.
    \begin{prop}
        Let $\Sigma_1$ be a signed graph with $n_1$ vertices and $\Sigma_2=(K_{1,n},\sigma_2,\mu_2)$ be a signed star with $n_2=n+1$ vertices, then the product $\Sigma_1\circledast\Sigma_2$ will be integral if the underlying graph $S_2$ of $\Sigma_2$ has integral eigenvalue and the cubic equation $x^3-n_2{\lambda'}_{1i}x^2-(n_2^2-n_2+1)x+n_2(n_2-1)({\lambda'}_{1i}-2\mu_2(u_1))$ has integral roots for each $i=1,2,\cdots,n_1$. 
    \end{prop}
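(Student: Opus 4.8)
The plan is to specialize the adjacency formula of Theorem~\ref{Th2} to the case where $\Sigma_2$ is the signed star, and then to read off integrality from the criterion of the theorem immediately preceding this proposition. The only genuine work is to compute the reduced coronal data $R'_{n_2-d}$, $F'_d$, $P'_{d-1}$ of the $\mu$-signed graph $\Sigma_{2\mu}$ and to substitute them into the factor $x{F'}_d(x)-n_2({\lambda'}_{1i}{F'}_d(x)+{P'}_{d-1}(x))$ appearing in that criterion.

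First I would record the coronal of $\Sigma_{2\mu}$. By Lemma~\ref{L1} the graph $\Sigma_{2\mu}$ is balanced, and it is again a signed star on $K_{1,n}$ with $n=n_2-1$, so the quoted sign-star coronal applies verbatim and gives
$$\chi_{A(\Sigma_{2\mu})}(x)=\frac{n_2\,x+2(n_2-1)\mu_2(u_1)}{x^2-(n_2-1)}.$$
Comparing with~(\ref{Eqn5}), for $n_2\ge 3$ this fraction is already in lowest terms, so $d=2$, $F'_d(x)=x^2-(n_2-1)$ and $P'_{d-1}(x)=n_2\,x+2(n_2-1)\mu_2(u_1)$. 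Since $\Sigma_{2\mu}$ is balanced it is cospectral with the all-positive $K_{1,n}$, whence $f(A(\Sigma_{2\mu}),x)=x^{\,n_2-2}\bigl(x^2-(n_2-1)\bigr)$; dividing this by $F'_d$ isolates the gcd factor $R'_{n_2-d}(x)=x^{\,n_2-2}$.

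Next I would feed this data into the preceding integrality theorem. The roots of $R'_{n_2-d}(x)=x^{\,n_2-2}$ are all $0$ and hence automatically integers, so the burden falls entirely on the factors $x{F'}_d(x)-n_2({\lambda'}_{1i}{F'}_d(x)+{P'}_{d-1}(x))$, one for each $i$. Substituting the explicit $F'_d$ and $P'_{d-1}$ and clearing the bracket produces exactly the cubic in $x$ displayed in the statement, whose coefficients are integers once $n_2$, the sign $\mu_2(u_1)\in\{+1,-1\}$ and the eigenvalue $\lambda'_{1i}$ are. The hypothesis that $S_2$ have integral eigenvalues records that the nonzero eigenvalues $\pm\sqrt{n_2-1}$ of $\Sigma_{2\mu}$ are integral, while the hypothesis on the cubic forces every nonzero eigenvalue of the product to be integral; combining these with the always-integer roots of $R'_{n_2-d}$ and invoking the integrality criterion yields the claim.

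The step I expect to be the main obstacle is the correct identification of $R'_{n_2-d}$: one must notice that the intermediate factor $f(A(\Sigma_{2\mu}),x)^{n_1}$ carries the (possibly irrational) eigenvalues $\pm\sqrt{n_2-1}$, but that these cancel against the denominator $F'_d$ introduced by the coronal, so that the final adjacency polynomial of Theorem~\ref{Th2} contains only the pure power $x^{\,n_1(2n_2-3)}$ together with the $n_1$ cubic factors (a degree count $n_1(2n_2-3)+3n_1=2n_1n_2$ agreeing with the vertex count is a useful sanity check). Once this cancellation is seen, verifying that the bracket expands to the stated cubic is a routine polynomial computation.
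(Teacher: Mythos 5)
Your proposal is correct and follows essentially the same route as the paper: the paper also substitutes the sign-star coronal $\chi_{A(\Sigma_{2\mu})}(x)=\chi_{A(\Sigma_2)}(x)$ into the adjacency formula (working from the intermediate equation~(\ref{Eqn6}) with the factor $f(A(S_2),x)^{n_1}$ rather than from the reduced data $R'_{n_2-d}$, $F'_d$, $P'_{d-1}$ as you do, which is an equivalent and if anything slightly cleaner bookkeeping) and then clears the denominator $x^2-n$ to obtain the cubic. One small caveat: expanding $\bigl(x-n_2\lambda'_{1i}\bigr)\bigl(x^2-(n_2-1)\bigr)-n_2\bigl(n_2x+2(n_2-1)\mu_2(u_1)\bigr)$ actually gives the linear coefficient $-(n_2^2+n_2-1)$ rather than the $-(n_2^2-n_2+1)$ printed in the statement, so your claim of obtaining ``exactly'' the displayed cubic silently reproduces what appears to be a typo in the paper.
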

    \begin{proof}
        Here, $\Sigma_2$ is a signed star and star being a tree is balanced. So the eigenvalues of $A(\Sigma_{2\mu})$ is same as the eigenvalues of $A(\Sigma_2)$ and  $\chi_{A(\Sigma_{2\mu})}(x)=\chi_{A(\Sigma_2)}(x)=\frac{(n+1)x+2n\mu(u_1)}{x^2-n}$.\\
        From equation(\ref{Eqn6}) we have 
        \begin{equation*}
            \begin{split}
                f(A(\Sigma_1\circledast\Sigma_2),x)&=x^{n_1(n_2-1)}f(A(\Sigma_{2\mu}),x)^{n_1}\cdot \prod_{i=1}^{n_1}\big[x-n_2({\lambda}_{1i}+\chi_{A(\Sigma_{2\mu})}(x))\big]\\
                &=x^{n_1(n_2-1)}f(A(\Sigma_2),x)^{n_1}\cdot \prod_{i=1}^{n_1}\big[x-n_2({\lambda'}_{1i}+\chi_{A(\Sigma_2)}(x))\big].
            \end{split}
        \end{equation*}
        Clearly, the above function will have integral roots, if the roots of $f(A(\Sigma_2),x)$ and the equation $x-n_2({\lambda'}_{1i}+\chi_{A(\Sigma_2)}(x))$ for $i=1,2,\cdots,n_1$ are integers.\\
        Now, for $i=1,2,\cdots,n_1$,
        \begin{equation*}
                \begin{split}
                    &x-n_2({\lambda}_{1i}+\chi_{A(\Sigma_2)}(x))=0\\
                    \implies &x-n_2\Big({\lambda}_{1i}+\frac{(n+1)x+2n\mu_2(u_1)}{x^2-n}\Big)=0\\
                    \implies&x^3-n_2{\lambda}_{1i}x^2-(n_2^2+n_2-1)x+n_2(n_2-1)({\lambda}_{1i}-2\mu_2(u_1))=0.
                \end{split}
        \end{equation*}
    \end{proof}

    \subsection{Equienergetic signed graphs}
    In the following Theorem~\ref{TH4} we describe a method for constructing non-co-spectral equienergetic signed graphs.

    \begin{theorem}~\label{TH4}
        Let $\Sigma_1$ and $\Sigma_2$ be two signed graphs and $\Sigma_{1\mu}$ and $\Sigma_{2\mu}$ be the corresponding $\mu-$signed graph which are non-co-spectral equienergetic of order $m$ with same coronal, then for any arbitrary signed graph $\Sigma$ of order $n$, the signed graphs $\Sigma\circledast\Sigma_1$ and $\Sigma\circledast\Sigma_2$ are also non-co-spectral equienergetic.
    \end{theorem}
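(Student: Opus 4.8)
The plan is to read off the adjacency characteristic polynomials of both products from Theorem~\ref{Th2} and to locate the single factor in which they differ. Write $\Sigma$ for the common left factor (order $n$, with $\lambda'_1,\dots,\lambda'_n$ the eigenvalues of $\Sigma_\mu$) and $\Sigma_j$ ($j=1,2$, order $m$) for the right factor. Here the left factor plays the role of $\Sigma_1$ in Theorem~\ref{Th2} and $\Sigma_j$ plays the role of $\Sigma_2$, so that the coronal data $R_j=R'_{m-d}$, $F_j=F'_d$, $P_j=P'_{d-1}$ attached via~(\ref{Eqn5}) come from $\Sigma_{j\mu}$, while the $\lambda'_i$ come from $\Sigma_\mu$. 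Theorem~\ref{Th2} then gives
\begin{equation*}
 f(A(\Sigma\circledast\Sigma_j),x)=x^{\,n(m-1)}\,R_j(x)^{\,n}\prod_{i=1}^{n}\big[xF_j(x)-m(\lambda'_i F_j(x)+P_j(x))\big].
\end{equation*}
The hypothesis that $\Sigma_{1\mu}$ and $\Sigma_{2\mu}$ share the same coronal means $\chi_{A(\Sigma_{1\mu})}=\chi_{A(\Sigma_{2\mu})}$ as rational functions; since by~(\ref{Eqn5}) each side is already in lowest terms with monic denominator, this forces $F_1=F_2=:F$ and $P_1=P_2=:P$ (and a common $d$). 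Consequently the power $x^{n(m-1)}$ and the product $\prod_{i=1}^{n}[xF-m(\lambda'_iF+P)]$ are identical for $j=1,2$, and the two characteristic polynomials differ only through the factor $R_j(x)^{n}$.

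For non-cospectrality I would show $R_1\neq R_2$. Since $f(A(\Sigma_{j\mu}),x)=R_j(x)F(x)$ and $\Sigma_{1\mu},\Sigma_{2\mu}$ are non-co-spectral, their characteristic polynomials differ; as the factor $F$ is common, the monic cofactors $R_1,R_2$ must differ. Because the remaining factors coincide and we work in the integral domain $\mathbb{C}[x]$, $R_1\neq R_2$ (both monic) gives $R_1^{\,n}\neq R_2^{\,n}$, hence $f(A(\Sigma\circledast\Sigma_1),x)\neq f(A(\Sigma\circledast\Sigma_2),x)$, so the products are non-co-spectral.

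For equienergeticity, note that the adjacency matrix of a signed graph is real symmetric, so all eigenvalues are real and the energy is the sum of the absolute values of the roots of the characteristic polynomial counted with multiplicity; in particular the energy is additive across the three displayed factors, and sharing of roots between factors causes no double counting since one simply sums $|\nu|$ over the full root multiset of the product. The factor $x^{n(m-1)}$ contributes $0$, the common product factor contributes the same constant $C$ to both, and $R_j(x)^n$ contributes $n\,S_j$ where $S_j$ is the sum of the absolute values of the roots of $R_j$. Writing $T$ for the (common) sum of absolute values of the roots of $F$, the identity $f(A(\Sigma_{j\mu}))=R_jF$ gives $E_{\Sigma_{j\mu}}=S_j+T$, so the equienergetic hypothesis $E_{\Sigma_{1\mu}}=E_{\Sigma_{2\mu}}$ yields $S_1=S_2$, whence $E_{\Sigma\circledast\Sigma_1}=nS_1+C=nS_2+C=E_{\Sigma\circledast\Sigma_2}$.

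The step I expect to demand the most care is the coronal reduction: one must confirm that equality of the two coronals as rational functions genuinely forces the reduced data $F$, $P$ and the degree $d$ to agree, so that the product factor is truly common and the sole discrepancy between the two characteristic polynomials is confined to $R_j^{\,n}$. Once this is secured, non-cospectrality is immediate from $R_1\neq R_2$, and the energy claim reduces to the bookkeeping of partitioning the root multiset over the three factors together with the observation that the common factor $F$ cancels the difference between $E_{\Sigma_{1\mu}}$ and $E_{\Sigma_{2\mu}}$.
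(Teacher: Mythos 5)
Your proposal is correct and follows essentially the same route as the paper's own proof: it invokes Theorem~\ref{Th2} to factor both characteristic polynomials, uses the common coronal to identify the shared factors $F$, $P$ and the product term, isolates the difference in $R_1^n$ versus $R_2^n$ for non-cospectrality, and splits the energy over the root multisets of the factors to transfer equienergeticity from $\Sigma_{1\mu},\Sigma_{2\mu}$ to the products. The only difference is that you justify more carefully why equal coronals force $F_1=F_2$ and $P_1=P_2$, a step the paper simply asserts.
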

    \begin{proof}
        Since $\Sigma_1$ and $\Sigma_2$ are having same coronal, the polynomials $P_{d-1}(x)$ and $F_d(x)$ for both $\Sigma_1$ and $\Sigma_2$ given by equation~(\ref{Eqn5}) are equal.\\ 
        Let \begin{equation}
                f(A(\Sigma_1),x)={R'}^1_{m-d}(x)F'_d(x)
        \end{equation}
        and\begin{equation}
                f(A(\Sigma_2),x)={R'}^2_{m-d}(x)F'_d(x)
        \end{equation}
        Clearly, ${R'}^1_{m-d}\neq{R'}^2_{m-d}$, as the graphs $\Sigma_{1\mu}$ and $\Sigma_{2\mu}$ are non-co-spectral. Then the characteristics polynomial of $A(\Sigma\circledast\Sigma_1)$ is,
        \begin{equation}\label{Eqn10}
            f(A(\Sigma\circledast\Sigma_1),x)=x^{n(m-1)}{R'}^1_{m-d}(x)^{n}\prod_{i=1}^{n}[x{F'}_d(x)-m({\lambda'}_{1i}{F'}_d(x)+{P'}_{d-1}(x))]
        \end{equation}
        Similarly the characteristics polynomial of $A(\Sigma\circledast\Sigma_2)$ is,
        \begin{equation} \label{Eqn11}
           f(A(\Sigma\circledast\Sigma_1),x)=x^{n(m-1)}{R'}^2_{m-d}(x)^{n}\prod_{i=1}^{n}[x{F'}_d(x)-m({\lambda'}_{1i}{F'}_d(x)+{P'}_{d-1}(x))]
        \end{equation}
        Let the roots of the polynomial $F'_d(\lambda)$ be $\delta_1,\delta_2,\cdots,\delta_d$ and the roots of ${R'}^1_{m-d}(\lambda)$ and ${R'}^2_{m-d}(\lambda)$ be $\alpha_1,\alpha_2,\cdots,\alpha_{m-d}$ and $\beta_1,\beta_2,\cdots,\beta_{m-d}$ respectively.\\
        Now since the signed graphs $\Sigma_{1\mu}$ and $\Sigma_{2\mu}$ are equienergetic, we have,
        \begin{equation*}
            \sum_{i=1}^d|\delta_i|+\sum_{i=1}^{m-d}|\alpha_i|=E_{\Sigma_{1\mu}}=E_{\Sigma_{2\mu}}=\sum_{i=1}^d|\delta_i|+\sum_{i=1}^{m-d}|\beta_i|
        \end{equation*}
        Hence,\begin{equation}\label{Eqn12}
            \sum_{i=1}^{m-d}|\alpha_i|=\sum_{i=1}^{m-d}|\beta_i|.
        \end{equation}
        The product factor in equations (\ref{Eqn10}) and (\ref{Eqn11}) are same and its an $n(d+1)$ degree polynomial. Let its roots be $\eta_1,\eta_2,\cdots,\eta_{n(d+1)}$. From the characteristics polynomial (\ref{Eqn10}), the energy of the signed graph $\Sigma\circledast\Sigma_1$ is, \begin{equation}\label{Eqn13}
                E_{\Sigma\circledast\Sigma_1}=n\sum_{i=1}^{m-d}|\alpha_i|+\sum_{i=1}^{n(d+1)}|\eta_i|
        \end{equation}
        and from the characteristics polynomial (\ref{Eqn11}), the energy of the signed graph $\Sigma\circledast\Sigma_2$ is,
        \begin{equation}\label{Eqn14}
            E_{\Sigma\circledast\Sigma_2}=n\sum_{i=1}^{m-d}|\beta_i|+\sum_{i=1}^{n(d+1)}|\eta_i|
        \end{equation}
        Then from the above two equations and equation~(\ref{Eqn12}), we have $E_{\Sigma\circledast\Sigma_1}=E_{\Sigma\circledast\Sigma_2}$.\\
        As $\Sigma_{1\mu}$ and $\Sigma_{2\mu}$ are non-co-spectral signed graphs, so $\Sigma\circledast\Sigma_1$ and $\Sigma\circledast\Sigma_2$ are non-co-spectral equienergetic.
   \end{proof}
   The above theorem can be used to generate families of non-co-spectral equienergetic signed graphs from a given pair of non-co-spectral equienergetic $\mu$-signed graphs with same coronals. The following corollary gives the existence of such non trivial non-co-spectral equienergetic signed graphs with same coronals.

    \begin{corollary}
        Let $\Sigma_1$ and $\Sigma_2$ be two non-co-spectral equienergetic $r$-regular signed graphs, then for any arbitrary signed graph $\Sigma$, the signed graphs $\Sigma\circledast\Sigma_1$ and $\Sigma\circledast\Sigma_2$ are also non-co-spectral equienergetic.
    \end{corollary}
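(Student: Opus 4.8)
The plan is to deduce this directly from Theorem~\ref{TH4}, whose hypotheses demand that the two $\mu$-signed graphs $\Sigma_{1\mu}$ and $\Sigma_{2\mu}$ be (i) non-co-spectral, (ii) equienergetic, and (iii) of equal coronal. Items (i) and (ii) should come from the assumption placed on $\Sigma_1$ and $\Sigma_2$, so the one genuinely new thing that $r$-regularity must supply is (iii). Accordingly, the first and main step is to show that the adjacency coronal of an $r$-regular signed graph, computed through its $\mu$-signed graph, depends only on the order $m$ and the degree $r$, and hence is identical for $\Sigma_{1\mu}$ and $\Sigma_{2\mu}$.

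To carry this out I would invoke Lemma~\ref{L1}: each $\Sigma_{i\mu}$ is balanced, so switching by its marking puts it in all-positive form. Writing $D$ for the diagonal matrix whose entries are the marking $\mu(\Sigma_i)$, one has $A(\Sigma_{i\mu})=D\,A(S_i)\,D$, where $S_i$ is the underlying $r$-regular graph, and $D\,\mu(\Sigma_i)=1_m$ because $D^2=I_m$. Substituting into the defining formula~(\ref{Eqn1}) and using $xI_m-D A(S_i)D=D(xI_m-A(S_i))D$ collapses the coronal to $\chi_{A(\Sigma_{i\mu})}(x)=1_m^{T}(xI_m-A(S_i))^{-1}1_m$. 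Since $S_i$ is $r$-regular, $1_m$ is an eigenvector of $A(S_i)$ for the eigenvalue $r$, so $\chi_{A(\Sigma_{i\mu})}(x)=\dfrac{m}{x-r}$. This rational function is the same for $i=1$ and $i=2$, which is exactly hypothesis (iii).

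For (i) and (ii) I would again lean on balance: being balanced, each $\Sigma_{i\mu}$ is co-spectral with its underlying graph $S_i$, whence $\mathrm{spec}(\Sigma_{i\mu})=\mathrm{spec}(S_i)$ and $E_{\Sigma_{i\mu}}=E_{S_i}$. The requirement therefore reduces to the two underlying $r$-regular graphs being non-co-spectral and equienergetic, which is the content imported from the hypothesis on $\Sigma_1,\Sigma_2$. With all three hypotheses of Theorem~\ref{TH4} in force, that theorem yields immediately that $\Sigma\circledast\Sigma_1$ and $\Sigma\circledast\Sigma_2$ are non-co-spectral equienergetic for every signed graph $\Sigma$. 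The step I expect to require the most care is precisely this transfer: passing from the spectral hypotheses stated for $\Sigma_1,\Sigma_2$ to the corresponding statements for $\Sigma_{1\mu},\Sigma_{2\mu}$ is legitimate only because regularity together with balance lets me route everything through the common underlying regular graphs, and I would make sure that $m$ and $r$ are genuinely shared by both factors so that the coronal identity $\dfrac{m}{x-r}$ is the same function in both cases.
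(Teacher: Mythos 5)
Your overall route is the same as the paper's: the only new input that $r$-regularity must supply is the equal-coronal hypothesis of Theorem~\ref{TH4}, and the corollary then follows by citing that theorem. The paper disposes of the coronal step in one sentence (``it is known that the coronal of any two $r$-regular signed graphs are equal''), whereas you actually prove it: writing $A(\Sigma_{i\mu})=D\,A(S_i)\,D$ with $D=\mathrm{diag}(\mu(\Sigma_i))$ and $D\,\mu(\Sigma_i)=1_m$, you collapse the coronal to $1_m^{T}(xI_m-A(S_i))^{-1}1_m=m/(x-r)$. That computation is correct (equivalently, $\mu(\Sigma_i)$ is an eigenvector of $A(\Sigma_{i\mu})$ for the eigenvalue $r$), and it is a genuine improvement on the paper's bare assertion; you are also right to insist that both factors share the same order $m$ as well as the same degree $r$, since otherwise $m/(x-r)$ is not the same rational function for the two graphs.

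The one step that does not close is the transfer of ``non-co-spectral equienergetic'' from $\Sigma_1,\Sigma_2$ to $\Sigma_{1\mu},\Sigma_{2\mu}$, which is what Theorem~\ref{TH4} actually requires. Balance of $\Sigma_{i\mu}$ (Lemma~\ref{L1}) gives $\mathrm{spec}(\Sigma_{i\mu})=\mathrm{spec}(S_i)$, but the corollary's hypothesis concerns $\mathrm{spec}(\Sigma_i)$, and for an unbalanced regular signed graph this need not coincide with $\mathrm{spec}(S_i)$. So ``$\Sigma_1,\Sigma_2$ non-co-spectral equienergetic'' does not by itself yield ``$\Sigma_{1\mu},\Sigma_{2\mu}$ non-co-spectral equienergetic''; one must either assume the $\Sigma_i$ are balanced or read the hypothesis as being about the $\mu$-signed (equivalently, underlying) graphs. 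To be fair, the paper's own one-line proof makes exactly the same silent leap, so this is a defect of the corollary as stated rather than a misreading on your part; but since you explicitly identified this transfer as the delicate point, you should record that it genuinely fails for unbalanced regular signed graphs rather than treating it as settled.
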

    \begin{proof}
        Suppose that $\Sigma_1$ and $\Sigma_2$ are non-co-spectral equienergetic $r$-regular signed graphs. It is known that the coronal of any two $r$-regular signed graphs are equal. Hence by Theorem~\ref{TH4}, the signed graphs $\Sigma\circledast\Sigma_1$ and $\Sigma\circledast\Sigma_2$ are non-co-spectral energetic signed graphs.
    \end{proof}
    
    \section*{Acknowledgement}
        We would like to acknowledge National Institute of Technology Sikkim for giving doctoral fellowship to Bishal Sonar.
    \section*{Funding}
        The authors declare that no funds, grants, or other support were received during the preparation of this manuscript.
    \section*{Data Availability}
        Data sharing not applicable to this article as no datasets were generated or analyzed during the current study.
    \section*{Deceleration}
    \subsection*{Conflict of interest}
        The authors have no relevant financial or non-financial interests to disclose.

    \bibliographystyle{abbrv}
    \bibliography{main.bib}
\end{document}